\newtheorem{thm}{Theorem}[section]
\newtheorem*{thm*}{Theorem}
\newenvironment{customthm}[1]
  {\innercustomthm}
  {\endinnercustomthm}
\newenvironment{customcor}[1]
  {\innercustomcor}
  {\endinnercustomcor}
\newtheorem{cor}[thm]{Corollary}
\newtheorem{prop}[thm]{Proposition}
\newtheorem{lem}[thm]{Lemma}
\newtheorem{quest}[thm]{Question}
\theoremstyle{definition}
\newtheorem{defn}[thm]{Definition}
\newtheorem*{thm1.2}{\textrm{Theorem 1.2}}
\theoremstyle{remark}
\newcommand{\Mbar}{\overline{\mathcal{M}}}
\newcommand{\M}{\mathcal{M}}
\newcommand{\Z}{\mathbb{Z}}
\newcommand{\Q}{\mathbb{Q}}
\newcommand{\bbS}{\mathbb{S}}
\newcommand{\ch}{\operatorname{ch}}
\def\A{\mathbb{A}}
\def\C{\mathbb{C}}
\def\Q{\mathbb{Q}}
\def\Z{\mathbb{Z}}
\def\calA{\mathcal{A}}
\def\calM{\mathcal{M}}
\def\calV{\mathcal{V}}
\def\calW{\mathcal{W}}
\def\calX{\mathcal{X}}
\def\calY{\mathcal{Y}}
\def\calZ{\mathcal{Z}}
\newcommand{\Ind}{\operatorname{Ind}}
\newcommand{\Res}{\operatorname{Res}}
\newcommand{\Spec}{\operatorname{Spec}}
\newcommand{\Exp}{\operatorname{Exp}}
\newcommand{\Log}{\operatorname{Log}}
\let\c@equation\c@thm
\numberwithin{equation}{section}
\title{Equivariant Hodge polynomials of heavy/light moduli spaces}
\author[S. Kannan]{Siddarth Kannan}\address{Department of Mathematics, Brown University, Providence, RI 02906}
\email{\url{siddarth_kannan@brown.edu}}
\author[S. Serpente]{Stefano Serpente}\address{Dipartimento di Matematica e Fisica, Università Roma Tre, Rome, I-00146}\email{\url{stefano.serpente@uniroma3.it}}
\author[C. Yun]{Claudia He Yun}\address{MPI MiS, 04103 Leipzig, Germany}
\email{\url{clyun@mis.mpg.de}}
\begin{document}
\begin{abstract}
Let $\Mbar_{g, m|n}$ denote Hassett's moduli space of weighted pointed stable curves of genus $g$ for the \textit{heavy/light} weight data $\left(1^{(m)}, 1/n^{(n)}\right)$, and let $\M_{g, m|n} \subset \Mbar_{g, m|n}$ be the locus parameterizing smooth, not necessarily distinctly marked curves. We give a change-of-variables formula which computes the generating function for $(S_m\times S_n)$-equivariant Hodge--Deligne polynomials of these spaces in terms of the generating functions for $S_{n}$-equivariant Hodge--Deligne polynomials of $\Mbar_{g,n}$ and $\M_{g,n}$. 
\end{abstract}
	
\maketitle\thispagestyle{empty}

\section{Introduction}
Given nonnegative integers $g$, $m$, and $n$ satisfying
\[2g - 2 + m + \mathrm{min}(n,1) > 0, \]
set $\Mbar_{g,m|n}$ to be Hassett's moduli space $\Mbar_{g, \mathcal{A}}$ of weighted pointed stable curves of genus $g$, for the weight data 
\[\mathcal{A} =\left(\underbrace{1,\ldots, 1}_{m}, \underbrace{1/n, \ldots, 1/n}_{n} \right).\]
This space is a connected, smooth, and proper Deligne-Mumford stack over $\Z$, and is a compactification of the moduli space $\mathcal{M}_{g, m+n}$ of smooth pointed algebraic curves of genus $g$ \cite{Hassett}; this family of weight data has been called \textit{heavy/light} in the literature \cite{CHMRtropical, KKLChow}. We also set $\calM_{g, m|n} \subset \Mbar_{g, m|n}$ to be the locus of smooth, not necessarily distinctly marked curves. In this paper we study the $(S_m \times S_n)$-equivariant Hodge--Deligne polynomials of $\calM_{g, m|n}$ and $\Mbar_{g, m|n}$. Throughout this paper we will work with the coarse moduli spaces of these stacks, as the mixed Hodge structure on the rational cohomology of a Deligne-Mumford stack coincides with that of its coarse moduli space.

If $X$ is a $d$-dimensional complex variety with an action of $S_m \times S_n$, its complex cohomology groups are $(S_m \times S_n)$-representations in the category of mixed Hodge structures. The $(S_m \times S_n)$-equivariant Hodge--Deligne polynomial of $X$ is given by the formula
\begin{equation}\label{HodgeDelignePolynomial}
E_X^{S_m \times S_n}(u,v) := \sum_{i,p,q = 0}^{2d} (-1)^{i} \ch_{m,n}\left(\mathrm{Gr}^F_p\mathrm{Gr}^{W}_{p + q} H^{i}_c(X; \C)\right) u^p v^q \in \Lambda^{(2)}[u,v],  \end{equation}
where $\Lambda^{(2)} = \Lambda \otimes \Lambda$ is the ring of bisymmetric functions, $\ch_{m, n}(V) \in \Lambda^{(2)}$ is the Frobenius characteristic of an $(S_m \times S_n)$-representation $V$, while $W$ and $F$ denote the weight and Hodge filtrations on the compactly supported cohomology of a complex algebraic variety, respectively. The Hodge--Deligne polynomial has also been referred to as the $E$-polynomial and the mixed Hodge polynomial in the literature. If $X$ is proper, and the mixed Hodge structure on each cohomology group is pure, as is the case for the coarse moduli space of $\Mbar_{g, m|n}$, the Hodge--Deligne polynomial specializes to the usual Hodge polynomial
\[\sum_{p,q = 0}^{2d} (-1)^{p+ q}\ch_{m, n}\left(H^{p,q}(X;\C)\right) u^p v^q. \]
For more details on mixed Hodge structures, see \cite{PetersSteenbrink} or \cite{MotivicIntegration}.

We assemble all of the equivariant Hodge--Deligne polynomials for heavy/light Hassett spaces with fixed genus into series with coefficients in $\Lambda^{(2)}$:
\[\mathsf{a}_g := \sum_{m,n} E^{S_m \times S_n}_{\calM_{g, m|n}}(u,v),\quad\overline{\mathsf{a}}_g := \sum_{m,n} E^{S_m \times S_n}_{\Mbar_{g, m|n}}(u,v) \in \Lambda^{(2)}[[u,v]]. \]
We also define
\[
\mathsf{b}_g := \sum_{n} E_{\calM_{g, n}}^{S_n}(u,v), \quad \overline{\mathsf{b}}_g := \sum_{n} E_{\Mbar_{g, n}}^{S_n}(u,v) \in \Lambda[[u,v]].
\]
In the above, for a variety $X$ with action of $S_n$, we have set $E_{X}^{S_n}(u,v)$ for the $S_n$-equivariant Hodge--Deligne polynomial of $X$, defined analogously to (\ref{HodgeDelignePolynomial}), replacing $\ch_{m,n}$ with the Frobenius characteristic $\ch_n$ of an $S_n$-representation, and replacing $\Lambda^{(2)}$ with $\Lambda$.

In order to state our main theorem on the above generating functions, we require some combinatorial preliminaries. Given a symmetric function $f \in \Lambda$, we set $f^{(j)} \in \Lambda^{(2)}$ for the inclusion of $f$ into the $j$th tensor factor, $j\in\{1,2\}$. These extend to maps $\Lambda[[u,v]] \to \Lambda^{(2)}[[u, v]]$. Let $p_i \in \Lambda$ be the $i$th power sum symmetric function. The coproduct $\Lambda \to \Lambda^{(2)}$ defined by
\[p_i \mapsto p_i^{(1)} + p_i^{(2)} \]
also extends to a map $\Delta: \Lambda[[u,v]] \to \Lambda^{(2)}[[u,v]]$. There are two plethysm operations $\circ_1, \circ_2$ defined on $\Lambda^{(2)}$, and these extend to $\Lambda^{(2)}[[u, v]]$ by 
\[p_n^{(i)} \circ_i q = q^n, \]
\[p_n^{(i)} \circ_j q = p_n^{(i)}, \]
for $\{i,j\} = \{1,2\}$ and $q \in \{u, v\}$. See Section \ref{SymmetricBackground} for more details and references on symmetric functions and the Frobenius characteristic.

The main contributions of this paper are the following formulas, which encode the combinatorial relationships between the generating functions defined above.
\begin{customthm}{A}\label{GeneratingFunctionFormulas}
Let $h_n \in \Lambda$ denote the $n$th homogeneous symmetric function. For $f \in \Lambda[[u, v]]$ set
\[\Exp(f) = \sum_{n > 0} h_n \circ f. \]
Then we have
\[\mathsf{a}_g = \Delta(\mathsf{b}_g) \circ_2 \Exp\left(p_1\right)^{(2)} \] and
\[\overline{\mathsf{a}}_g = \Delta(\overline{\mathsf{b}}_g) \circ_2 \left(p_1 - \frac{\partial \mathsf{b}_0}{\partial p_1} \right)^{(2)} \circ_2 \Exp\left(p_1\right)^{(2)}. \]
\end{customthm}
A formula for the series $\mathsf{b}_0$ has been given by Getzler \cite{GetzlerGenusZero}; therefore Theorem \ref{GeneratingFunctionFormulas} determines $\mathsf{a}_g$ and $\overline{\mathsf{a}}_g$ in terms of $\mathsf{b}_g$ and $\overline{\mathsf{b}}_g$. Moreover, this transformation is invertible, as $\Exp$ has a plethystic inverse $\Log$ and $p_1 - \partial \mathsf{b}_0/\partial p_1$ is inverse to $p_1 + \partial \overline{\mathsf{b}}_0/\partial p_1$. There is a numerical analogue of Theorem \ref{GeneratingFunctionFormulas} which deals with the non-equivariant Hodge--Deligne polynomials, defined by the assignment
\[E_X(u, v) :=  \sum_{i,p,q = 0}^{2d} (-1)^{i} \dim\left(\mathrm{Gr}^F_p\mathrm{Gr}^{W}_{p + q} H^{i}_c(X; \C)\right) u^p v^q \in \Q[u,v].  \]
Set

\[a_g :=  \sum_{m,n} E_{\mathcal{M}_{g, m|n}}(u, v) \frac{x^m y^n}{m! n!}, \quad \overline{a}_g := \sum_{m,n} E_{\Mbar_{g, m|n}}(u, v) \frac{x^m y^n}{m! n!} \in \Q[[u, v, x, y]], \]
and similarly put
\[b_g := \sum_{n} E_{\mathcal{M}_{g, n}}(u, v) \frac{x^n}{n!}, \quad  \overline{b}_g := \sum_{n} E_{\Mbar_{g, n}}(u, v) \frac{x^n}{n!} \in \Q[[u, v, x]]. \]

\begin{customcor}{B}\label{NumericalFormula}
We have
\[a_g = b_g|_{x \to w} \]
where $w =x + e^y - 1$,
and
\[\overline{a}_g = \overline{b}_g|_{x \to z}, \]
where
\[z = x + e^{y} + \frac{e^{uvy} - uv\cdot e^{y} + uv - 1}{uv - u^2v^2} - 1. \]
\end{customcor}

Theorem \ref{GeneratingFunctionFormulas} and Corollary \ref{NumericalFormula} allow for many explicit computations. For example, Getzler ~\cite{GetzlerSemiClassical} gives a recursive formula for the calculation of $\overline{\mathsf{b}}_1$, which allows us to compute $\overline{\mathsf{a}}_1$ and $\overline{a}_1$; sample calculations are included at the end of the paper in Tables \ref{EquivariantGenusOneTable} and \ref{NumericalGenusOneTable}. Similarly, Chan et al. in \cite{CFGP} give a formula for the $S_n$-equivariant weight 0 compactly supported Euler characteristic of $\calM_{g, n}$ in arbitrary genus, so Theorem \ref{GeneratingFunctionFormulas} gives a practical method to compute the $(S_m \times S_n)$-equivariant weight 0 compactly supported Euler characteristic of $\calM_{g, m|n}$. Sample computations for $g = 2$ have been included in Table \ref{WeightZeroGenusTwoTable}.

\subsection{Context}
The heavy/light moduli space $\Mbar_{g, m|n}$ has been studied in several algebro-geometric contexts. It is of interest in its own right, as a modular compactification of $\calM_{g, m+n}$ which admits a birational morphism from the Deligne--Mumford--Knudsen moduli space $\Mbar_{g, m+n}$. It may be viewed as a resolution of singularities of the $n$-fold product of the universal curve over $\Mbar_{g, m}$ \cite{Janda}. It arises in the theory of stable quotients ~\cite{MOP} and in tropical geometry ~\cite{CHMRtropical, MUW, HahnLi}. As $g$, $m$, and $n$ vary, the spaces $\Mbar_{g, m|n}$ form the components of Losev--Manin's extended modular operad \cite{LosevManinOperad}; when $g = 0$ and $m = 2$, the space $\Mbar_{0, 2|n}$ is a toric variety, and it coincides with the Losev--Manin moduli space of stable chains of $\mathbb{P}^1$'s \cite{LosevManinToric}.

Part of the motivation for our work is to generate new data on the symmetric group representations afforded by the cohomology  of these moduli spaces, and to understand how changing the weight data affects the cohomological complexity. For example, our Corollary \ref{cor: chiComparison} shows that asymptotically, the all-light-points compactification $\Mbar_{1, 0|n}$ has significantly less cohomology than the Deligne--Mumford compactification $\Mbar_{1, n}$. We ask if the same phenomenon holds in higher genus (Question \ref{asymptoticsQuestion}). In hopes that our data will help others prove new theorems about the cohomology of these moduli spaces, we also ask (Question \ref{weightzeroQuestion}) whether one can find a closed formula for the equivariant weight zero compactly supported Euler characteristics of the open moduli spaces $\M_{g, m|n}$, as is done for $\M_{g, n}$ in \cite{CFGP}. Our theorem in principle allows one to calculate these Euler characteristics for arbitrary $g$, $m$, and $n$, but as our formulas involve plethysm, this is a very difficult computational task.

\subsection{Related work}
The first part of Corollary \ref{NumericalFormula} follows from an observation made in \cite{tropicalhassett}: that in the Grothendieck ring of varieties, one has an equality
\[[\calM_{g, m|n} ] = \sum_{k = 1}^{n} S(n, k) [\calM_{g, m + k}],  \]
where $S(n, k)$, the Stirling number of the second kind, counts the number of partitions of $\{1, \ldots, n\}$ with $k$ parts. It follows that the generating function $a_g$ can be obtained from $b_g(x + t)$ by making the substitution $t = e^{y} - 1$; this transformation is called the \textit{Stirling transform}. Both parts of Theorem \ref{GeneratingFunctionFormulas} involve  plethysm with the symmetric function $\Exp(p_1)^{(2)}$, which transforms to $e^{y} - 1$ under the \textit{rank} homomorphism $\Lambda \otimes \Lambda \to \Q[[x, y]]$. Thus Theorem \ref{GeneratingFunctionFormulas} can be viewed as an application of the equivariant version of the Stirling transform.

In genus zero, the problem of computing the equivariant Hodge polynomials of $\Mbar_{0, m|n}$ has been studied by Bergstr{\"o}m--Minabe \cite{BergstromMinabe1, BergstromMinabe2} and by Chaudhuri \cite{Chaudhuri}. Our formula gives a third approach to this problem, which applies in arbitrary genus. 

Also in genus zero, the Chow groups of $\Mbar_{0, m|n}$ have been computed by Ceyhan \cite{Ceyhan}. The Chow ring has been computed by Petersen \cite{PetersenChow} and by Kannan--Karp--Li \cite{KKLChow}. Our combinatorial proofs of plethystic formulas are similar to those in Petersen's work \cite[\S 4]{PetersenTautological} on moduli spaces of genus two curves of compact type, and their tautological rings.

The techniques of this paper are based on prior work on the operad structure of moduli of stable curves and maps, by Getzler \cite{GetzlerGenusZero, GetzlerSemiClassical}, Getzler--Kapranov \cite{GetzlerKapranov}, and Getzler--Pandharipande \cite{GetzlerPandharipande}. In particular, the main tools of the paper are Getzler--Pandharipande's theory of $\bbS$-spaces, which encode sequences of varieties with $S_n$-actions, together with a careful analysis of permutation group actions on the boundary strata of the spaces $\Mbar_{g, m|n}$. To carry this out, we use the language of $\bbS^2$-spaces, which are a mild generalization of Getzler--Pandharipande's theory. In this way our approach is similar to that of Chaudhuri \cite{Chaudhuri}, who uses the language of \textit{$\bbS^2$-modules}. To understand the cohomology of the open moduli space with its mixed Hodge structure, Chaudhuri uses the Leray spectral sequence associated to the fibration $\M_{0, m|n} \to \M_{0, m|n-1}$. However, this technique seems to be limited to the genus zero case. We restrict ourselves to the study of the Hodge--Deligne polynomial of $\M_{g, m|n}$, which contains less information than the full cohomology ring with its mixed Hodge structure.

\subsection{Outline of the paper} We review the necessary background on symmetric functions, the Frobenius characteristic, and Hassett spaces in Section \ref{background}. In Section \ref{GrothendieckRing}, we define the Grothendieck ring of $\bbS^2$-spaces and its composition operations as categorifications of plethysm. We then use these composition operations to prove Theorem \ref{GeneratingFunctionFormulas} in Section \ref{MainProof}. Explicit calculations and accompanying remarks are included at the end of the paper, in Section \ref{calculations}.

 \subsection*{Acknowledgements} We thank Madeline Brandt, Melody Chan, and Dhruv Ranganathan for useful conversations. SK was supported by an NSF Graduate Research Fellowship.

\section{Background}\label{background}
Here we briefly recall some background on symmetric functions and on Hassett spaces.

\subsection{Symmetric functions and the Frobenius characteristic}\label{SymmetricBackground} For a more detailed background on the ring of symmetric functions, see Macdonald \cite{Macdonald}, Stanley \cite{Stanley}, or Getzler--Kapranov \cite[\S 7]{GetzlerKapranov}.
The ring $\Lambda$ of symmetric functions over $\Q$ is defined as \[\Lambda := \lim_{\longleftarrow} \Q[[x_1, \ldots, x_n]]^{S_n}.\] Elements of $\Lambda$ are power series which are invariant under any permutation of the variables. We have that
\[\Lambda = \Q[[p_1, p_2, \ldots]] \]
where $p_i = \sum_{k > 0} x_k^i$ is the $i$th power sum symmetric function. The ring $\Lambda$ is graded by degree, and $p_i$ has degree $i$. One can view $\Lambda$ as the Grothendieck ring of \textit{$\bbS$-modules}, where an $\bbS$-module $\calV$ is the data of an $S_n$-representation $\calV(n)$ for each $n \geq 0$. The ring structure on the Grothendieck ring of $\bbS$-modules is induced by the tensor product:
\[ (\calV \otimes \calW)(n) = \bigoplus_{j = 0}^{n} \Ind_{S_j \times S_{n - j}}^{S_n} \calV(j) \otimes \calW(n-j), \]
where $\Ind$ denotes induction of representations. Given an $\bbS$-module $\calV$, the Frobenius characteristic $\ch(\calV)$ is defined by
\[\ch (\calV) := \sum_{n \geq 0} \ch_n(\calV(n))\],
where for an $S_n$-representation $V$, we have
\[ \ch_n(V) := \frac{1}{n!} \sum_{\sigma \in S_n} \mathrm{Tr}_V(\sigma) p_{\lambda(\sigma)}, \]
where $\lambda(\sigma)$ is the cycle type of the permutation $\sigma$, and for a partition $\lambda \vdash n$ we set $p_\lambda := \prod_{i} p_{\lambda_i}$. The Frobenius characteristic induces a ring isomorphism
\[ \ch : K_0(\bbS\text{-modules}) \to \Lambda,  \]
where $K_0(\bbS\text{-modules})$ is the aforementioned Grothendieck ring of $\bbS$-modules. In particular, if $V$ is an $S_n$-representation, then $\ch_n(V)$ determines $V$: the Schur functions $s_{\lambda}$ for $\lambda \vdash n$ form a basis for the homogeneous degree $n$ part of $\Lambda$, and if
\[V = \bigoplus_{\lambda \vdash n} W_{\lambda}^{\oplus a_\lambda} \]
is the decomposition of $V$ into Specht modules, then
\[\ch_n(V) = \sum_{\lambda \vdash n} a_{\lambda} s_{\lambda}. \]
We define the homogeneous symmetric functions $h_n$ by
\[ h_n := \ch_{n}(\mathrm{Triv}_n), \]
where $\mathrm{Triv}_n$ is the trivial $S_n$-representation of dimension one. Note that $h_n = s_n$.

There is an associative operation $\circ$, called \textit{plethysm}, on $\Lambda$. The plethysm $f \circ g$ is defined when $f$ has bounded degree, or when the degree $0$ term of $g$ vanishes. It is characterized by the following formulas:
\begin{itemize}
    \item[(i)] $(f_1 + f_2) \circ g = f_1 \circ g + f_2 \circ g$,
    \item[(ii)] $(f_1 f_2) \circ g = (f_1 \circ g)(f_2 \circ g)$
    \item[(iii)] if $f = f(p_1, p_2, \ldots)$, then $p_n \circ f = f(p_n, p_{2n}, \ldots)$.
\end{itemize}

Plethysm has an interpretation on the level of Frobenius characteristics: given two $\bbS$-modules $\calV$ and $\calW$ with $\calW(0) = 0$, define a third $\bbS$-module $\calV \circ \calW$ by the formula
\[(\calV \circ \calW)(n) := \bigoplus_{j \geq 0} \calV(j) \otimes_{S_j} \calW^{\otimes j}(n). \]
Then $\ch(\calV \circ \calW) = \ch(\calV) \circ \ch(\calW)$; see \cite[Proposition 7.3]{GetzlerKapranov} or \cite[Chapter 7, Appendix 2]{Stanley}.

All of these constructions generalize to $(S_m \times S_n)$-representations. As in the introduction, we set \[\Lambda^{(2)}:= \Lambda \otimes \Lambda;\] we call $\Lambda^{(2)}$ the ring of \textit{bisymmetric functions}. Given $f \in \Lambda$, we write $f^{(j)}$ for the inclusion of $f$ into the $j$th tensor factor. Then we have
\[\Lambda^{(2)} = \Q[[p_1^{(1)}, p_1^{(2)}, p_2^{(1)}, p_2^{(2)}, \ldots ]]. \]
Define an $\bbS^2$-module $\calV$ to be the data of an $(S_m \times S_n)$-representation $\calV(m, n)$ for each $m, n \geq 0$. Given an $(S_{m} \times S_n)$-representation $V$, its Frobenius characteristic is the bisymmetric function
\label{frobcharmn}\[\ch_{m,n}(V) := \frac{1}{m!\cdot n!} \sum_{(\sigma, \tau) \in S_{m} \times S_n} \mathrm{Tr}_V(\sigma, \tau) p^{(1)}_{\lambda(\sigma)} p^{(2)}_{\lambda(\tau)}. \]
Just as in the single variable case, the bisymmetric function $\ch_{m,n}(V)$ completely determines the $(S_m \times S_n)$-representation $V$: if
\[V = \bigoplus_{\substack{{\lambda \vdash m}\\{\mu \vdash n}}} \left(W_{\lambda} \otimes W_{\mu} \right)^{\oplus a_{\lambda \mu}},  \]
then
\[\ch_{m,n}(V) = \sum_{\lambda, \mu} a_{\lambda \mu} s_{\lambda}^{(1)} s_{\mu}^{(2)}. \]
The Frobenius characteristic of a $\bbS^2$-module $\calV$ is defined by
\[ \ch(\calV) := \sum_{m,n \geq 0} \ch_{m, n}(\calV(m, n)). \]
It furnishes a ring isomorphism from the Grothendieck ring of $\bbS^2$-modules to $\Lambda^{(2)}$.
The ring $\Lambda^{(2)}$ has two plethysm operations $\circ_1$ and $\circ_2$; the operation $f \circ_i g$ is defined whenever $f$ has bounded degree, or the degree $(0,0)$ term of $g$ vanishes. These operations are characterized by:
\begin{itemize}
\item[(i)] $(f_1 + f_2) \circ_i g = f_1 \circ_i g + f_2 \circ_i g$,
\item[(ii)] $(f_1 f_2) \circ_i g = (f_1 \circ_i g)(f_2 \circ_i g)$;
\item[(iii)] if $f = f(p_1^{(1)}, p_1^{(2)}, p_2^{(1)}, p_2^{(2)} \ldots)$, then $p_n^{(i)} \circ_i f = f(p_n^{(1)}, p_n^{(2)}, p_{2n}^{(1)}, p_{2n}^{(2)}, \ldots )$, for any $i, j \in \{1,2\}$, and
\item[(iv)] $p_n^{(i)} \circ_j f = p_n^{(i)}$ if $i \neq j$;
\end{itemize}
see Chaudhuri ~\cite{Chaudhuri}. We will make use of the following interpretation of plethysm, for which we are not aware of a suitable reference in the bisymmetric case. Let $\calV$ be an $\bbS^2$-module, and let $\calW$ be an $\bbS$-module with $\calW(0) = 0$. We can compose these in two ways to get an $\bbS^2$-module:
\begin{align*}
    (\calV \circ_1 \calW)(m, n) &= \bigoplus_{j \geq 0} \calV(j, n) \otimes_{S_j} \calW^{\otimes j}(m)\\
     (\calV \circ_2 \calW)(m, n) &= \bigoplus_{j \geq 0} \calV(m, j) \otimes_{S_j} \calW^{\otimes j}(n)
\end{align*}
We will interpret plethysm of bisymmetric functions in terms of these composition operations.
\begin{prop}\label{BisymmetricPlethysm}
    Let $\calV$ be an $\bbS^2$-module, and let $\calW$ be an $\bbS$-module. Then
    \[\ch(\calV \circ_1 \calW) = \ch(\calV) \circ_1 \ch(\calW)^{(1)} \] and
    \[ \ch(\calV \circ_2 \calW) = \ch(\calV) \circ_2 \ch(\calW)^{(2)}. \]
\end{prop}
\begin{proof}
    Since
    \[ (\calV_1 \oplus \calV_2) \circ_i \calW \cong (\calV_1 \circ_i \calW) \oplus (\calV_2 \circ_i \calW), \]
    it suffices to consider the case where $\calV$ is supported in a single degree $(m, k)$ and $\calV(m, k)$ is irreducible, so that $\calV(m, k) \cong V_\lambda \otimes V_\mu$, where $\lambda \vdash n$, $\mu \vdash k$, and $V_\lambda, V_\mu$ are the respective Specht modules. We will also suppose that $i = 2$. In this case we have
    \begin{align*}
    (\calV \circ_2 \calW)(m, n) &=  \calV(m, k) \otimes_{S_k} \calW^{\otimes k}(n) \\&\cong (V_\lambda \otimes V_\mu) \otimes_{S_k} \calW^{\otimes k}(n) \\&\cong V_\lambda \otimes (V_\mu \otimes_{S_k} \calW^{\otimes k}(n)) \\&\cong V_\lambda \otimes (V_\mu \circ \calW)(n).
    \end{align*}
    Taking $\ch$ on both sides, we see that
    \begin{align*}
        \ch(\calV \circ_2 \calW) &= \ch(V_\lambda) \otimes \ch (V_\mu \circ \calW) \\&=  \ch(V_\lambda) \otimes (\ch (V_\mu) \circ \ch(\calW)) \\&= \left(\ch(V_\lambda) \otimes \ch(V_\mu)\right) \circ_2 1 \otimes \ch(\calW) \\&= \ch(\calV) \circ_2 (1 \otimes \ch(\calW));
    \end{align*} 
the case of the operation $\circ_1$ is similar.
\end{proof}
The ring $\Lambda$ is a Hopf algebra, with coproduct $\Delta: \Lambda \to \Lambda^{(2)}$ defined by
\[p_i \mapsto p_i^{(1)} + p_i^{(2)}. \]
On the level of Frobenius characteristic, we have
\begin{equation}\label{CoprodFrobenius}
    \Delta(\ch_n(V)) = \sum_{k = 0}^{n} \ch_{k, n-k}\left(\Res^{S_n}_{S_{k} \times S_{n - k}} V\right),
\end{equation}
where $\Res$ denotes restriction of representations. There is a rank homomorphism
\begin{equation}\label{Rank}
\mathrm{rk}: \Lambda \to \Q[[x]],
\end{equation}
determined by
\[\ch_n(V) \mapsto \dim(V) \cdot \frac{x^n}{n!}, \]
or equivalently, $p_1 \mapsto x$ and $p_n \mapsto 0$ for $n > 1$. This takes plethysm into composition of power series. We use the same notation for the morphism
\begin{equation}\label{BisymmetricRank}
\mathrm{rk}: \Lambda^{(2)} \to \Q[[x, y]]
\end{equation}
determined by 
\[\ch_{m,n}(V) \mapsto \dim(V) \cdot \frac{x^m y^n}{m! n!}, \]
or $p_1^{(1)} \mapsto x$, $p_1^{(2)} \mapsto y$, and $p_n^{(j)} \mapsto 0$ for $n > 1$. In this case, the two plethysm operations $\circ_1$ and $\circ_2$ are carried into composition in $x$ and $y$, respectively.
\subsection{Hassett spaces}
Let $g\geq 0$, $n \geq 1$ be two integers, and let $\mathcal{A} =(a_1 ,\dots, a_n )\in ((0,1]\cap \mathbb{Q})^n$ be a weight datum such that
$2g-2 + a_1 +\dots+a_n >0.$
Let $C$ be a curve, at worst nodal, with $p_1 , \dots , p_n$ smooth points of $C$. We say that $(C,p_1,\dots, p_n)$ is  \textit{$\mathcal{A}$-stable} if
\begin{enumerate}[(i)]
    \item the twisted canonical sheaf $K_C + a_1p_1 +...+a_np_n$ is ample;
    \item whenever a subset of the marked points $p_i$ for $i\in S \subset \{1,\dots, n\}$ coincide, we have $\sum_{i\in S} a_i \leq 1.$
\end{enumerate}

Equivalently, condition (i) is that for each irreducible component $E$ of $C$, we have
\begin{equation}
    2g(E)- 2 + |E\cap \overline{C \smallsetminus E}| + 2|\mathrm{Sing}(E)| + \sum_{i\mid p_i\in E}a_i > 0,
\label{eqn: stability condition}
\end{equation} see \cite[Proposition 3.3]{ulirsch2015tropical}. Hassett shows that there exists a connected Deligne-Mumford stack $\overline{\mathcal{M}}_{g,\mathcal{A}}$ of dimension $3g - 3 + n$, smooth and proper over $\mathbb{Z}$, which parameterizes $\mathcal{A}$-stable curves of genus $g$ \cite{Hassett}. When $\mathcal{A} =(1,\dots, 1)$ is a sequence of $n$ ones, Hassett stability coincides
with the Deligne--Mumford--Knudsen stability, and $\overline{\mathcal{M}}_{g,\mathcal{A}} =\overline{\mathcal{M}}_{g,n}$. Each stack $\Mbar_{g, \mathcal{A}}$ is equipped with a birational morphism
\[\rho_{\calA} : \Mbar_{g, n} \to \Mbar_{g, \calA}. \]


In this paper we are interested in the family  of weight data
\[\calA_{m,n} = \left(\underbrace{1,\ldots, 1}_{m}, \underbrace{1/n, \ldots, 1/n}_{n} \right),\]
and as in the introduction, we put $\Mbar_{g, m|n}$ for the resulting moduli space, called the heavy/light Hassett space. We say that a curve is \textit{$(m|n)$-stable} if it is $\mathcal{A}_{m,n}$-stable. We now characterize $(m|n)$-stability in combinatorial terms.

\begin{defn}
For $(C, p_1, \ldots, p_{m + n}) \in \Mbar_{g, m + n}$, let $T \subset C$ be a union of irreducible components of $C$. We say $T$ is a \textit{rational tail} if $T$ is a connected curve of arithmetic genus zero, and $T$ meets $\overline{C \smallsetminus T}$ in a single point.
\end{defn}
The following lemma is a straightforward application of (\ref{eqn: stability condition}), so we omit the proof.
\begin{lem}\label{CombinatorialStability}
Let $(C, p_1, \ldots, p_ {m+n}) \in \Mbar_{g, m+n}$. Then $C$ is $(m|n)$-stable if and only if every rational tail of $C$ contains at least one marking with index in $\{1, \ldots, n\}$.
\end{lem}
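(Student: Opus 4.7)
The plan is to analyze $\calA_{m,n}$-stability componentwise via (\ref{eqn: stability condition}), comparing against the Deligne--Mumford stability inequality that already holds for $C$. First I would observe that condition (ii) of $\calA_{m,n}$-stability is automatic: the markings $p_1,\ldots,p_{m+n}$ are already pairwise distinct (as $C\in\Mbar_{g,m+n}$) and each individual weight is at most $1$. So only condition (i) is at stake. For a component $E\subset C$, write $g_E:=g(E)$, $\delta_E:=|(E\cap\overline{C\setminus E})\cup\mathrm{Sing}(E)|$, and $h_E$, $\ell_E$ for the number of heavy and light markings on $E$, respectively.

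For the forward direction, I would suppose $(m|n)$-stability fails at $E$, so
\[ 2g_E - 2 + \delta_E + h_E + \ell_E/n \leq 0. \]
If $2g_E + \delta_E + h_E = 2$, this forces $\ell_E = 0$, but then DM-stability at $E$ reduces to $0 > 0$, a contradiction. Hence $2g_E + \delta_E + h_E \leq 1$, which forces $g_E = 0$. The sub-case $\delta_E = 0$ would make $E = C$ smooth and connected with $m = h_E \leq 1$, violating the standing hypothesis $2g - 2 + m + \min(n,1) > 0$. Thus $\delta_E = 1$ and $h_E = 0$, i.e., $E$ is itself a rational tail supporting only light markings.

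For the backward direction, given a rational tail $T\subset C$ supporting only light markings, I would select a terminal component $E\subseteq T$ that meets $\overline{C\setminus E}$ in exactly one point: take $E = T$ if $T$ is irreducible, otherwise a leaf of the dual tree of $T$ distinct from the unique component of $T$ incident to $\overline{C\setminus T}$ (such a leaf exists because any tree with at least two vertices has at least two leaves). Then $E\cong\P^1$ has $\delta_E = 1$, $h_E = 0$, and $\ell_E\leq n$, so
\[ 2g_E - 2 + \delta_E + h_E + \ell_E/n \;=\; -1 + \ell_E/n \;\leq\; 0, \]
which witnesses $(m|n)$-instability at $E$.

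The main obstacle I anticipate is the careful treatment of the degenerate configurations with $\delta_E = 0$ in the forward direction, where one must explicitly invoke the standing hypothesis $2g - 2 + m + \min(n,1) > 0$ (the one ensuring that $\Mbar_{g,m|n}$ is defined) to eliminate $E = C$ smooth of genus $0$ with $m\leq 1$. Everything else amounts to a direct comparison of two integer-valued inequalities.
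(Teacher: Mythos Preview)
Your proof is correct and follows essentially the same componentwise analysis of the inequality (\ref{eqn: stability condition}) as the paper does. The only cosmetic difference is that your ``forward'' and ``backward'' directions are the contrapositives of the paper's, so the labels are swapped; mathematically the two implications are the same. Your treatment is in fact slightly more careful than the paper's: you explicitly dispose of the degenerate sub-case $\delta_E=0$ (forcing $E=C$ smooth of genus~$0$ with $m\leq 1$) by invoking the standing inequality $2g-2+m+\min(n,1)>0$, and you spell out why a reducible rational tail has a terminal leaf distinct from the attaching component, points the paper leaves implicit.
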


\section{The Grothendieck ring of $\bbS^2$-spaces}\label{GrothendieckRing}
A $G$-variety is a variety with an action of a group $G$.
An $\bbS$-space $\calX$ is a sequence of $S_n$-varieties $\calX(n)$ for $n \geq 0$.
Getzler--Pandharipande define a Grothendieck ring of $\mathbb{S}$-spaces \cite{GetzlerPandharipande}. We briefly generalize this formalism to the case of $(S_m \times S_n)$-varieties. First, we define the Grothendieck group
\[ K_0(\mathsf{Var}, S_m \times S_n) \]
of $(S_m \times S_n)$-varieties. 
This group is constructed by first taking the free abelian group generated by isomorphism classes of $(S_m \times S_n)$-varieties, and then imposing the relation
\[[X] = [X \smallsetminus Y] + [Y] \]
whenever $Y$ is an $(S_m \times S_n)$-invariant subvariety of $X$. We define an \textit{$\bbS^2$-space} $\calX$ to be a collection of varieties $\calX(m,n)$ together with an action of $S_m \times S_n$ for each pair $(m,n)$ with $m,n\geq 0$. We refer to $\calX(m,n)$ as the \textit{arity} $(m,n)$ component of $\calX$. We define the Grothendieck group of $\bbS^2$-spaces as the product
\[ K_0(\mathsf{Var}, \bbS^2) := \prod_{m,n \geq 0} K_0(\mathsf{Var}, S_m \times S_n).  \]
We can make $K_0(\mathsf{Var}, \bbS^2)$ into a ring using the $\boxtimes$-product on $\bbS^2$-spaces:
\[(\calX \boxtimes \calY)(m,n) = \coprod_{i = 0}^{m} \coprod_{j = 0}^{n} \Ind_{S_i \times S_{m - i} \times S_{j} \times S_{n - j}}^{S_{m} \times S_n} \calX(i, j) \times \calY(m-i,n-j). \]
This generalizes the box product on $K_0(\mathsf{Var}, \bbS)$, given by
\[ (\calX \boxtimes \calY)(n) = \coprod_{j = 0}^{n} \Ind_{S_j \times S_n-j}^{S_n} \calX(j) \times \calY(n-j). \]

The ring $K_0(\mathsf{Var}, \bbS^2)$ is an algebra over the subring $K_0(\mathsf{Var}) = K_0(\mathsf{Var}, S_0 \times S_0)$, which is nothing but the usual Grothendieck group of varieties. We will make use of a certain composition structure on the Grothendieck ring of $\bbS^2$-spaces: we will compose an $\bbS^2$-space with an $\bbS$-space, as follows. 
Given $\calX$ a $\bbS^2$-space and $\calY$ an $\bbS$-space with $\mathcal{Y}(0) = \varnothing$, we can define two composition operations, $\circ_1$ and $\circ_2$:
\begin{equation}\label{circone}
    (\calX \circ_1 \calY)(m,n) = \coprod_{i = 0}^{\infty} (\calX(i, n) \times \calY^{\boxtimes i}(m))/S_i,
\end{equation}
and
\begin{equation}\label{circtwo}
    (\calX \circ_2 \calY)(m,n) =  \coprod_{j = 0}^{\infty}(\calX(m, j) \times \calY^{\boxtimes j}(n))/S_j.
\end{equation}
Given an $\bbS^2$-space $\mathcal{X}$, we define its \textit{Hodge--Deligne series} by
\[\mathsf{e}(\mathcal{X}) := \sum_{m,n \geq 0} E^{S_m \times S_n}_{\mathcal{X}(m,n)}(u,v) \in \Lambda^{(2)}[[u, v]]. \]
The lift of this series to the Grothendieck ring of mixed Hodge structures has been called the Serre characteristic or the Hodge-Grothendieck characteristic ~\cite{Bagnarol}.

The composition operations (\ref{circone}) and (\ref{circtwo}) should be viewed as categorifications of plethysm in the following sense: if $\calX$ is an $\bbS^2$-space, and $\calY$ is an $\bbS$-space, then
\begin{equation}\label{CompositionWorks}
\mathsf{e}(\mathcal{X} \circ_i \mathcal{Y}) = \mathsf{e}(\mathcal{X}) \circ_i \mathsf{e}(\mathcal{Y})^{(i)} 
\end{equation}
for $i = 1, 2$, where for an $\bbS$-space $\calZ$, one defines
\[\mathsf{e}(\calZ) :=\sum_{n \geq 0} E^{S_n}_{\calZ(n)}(u, v) \in \Lambda[[u,v]]. \]
This follows upon upgrading Proposition \ref{BisymmetricPlethysm} from $\bbS^2$-modules in vector spaces to $\bbS^2$-modules in the category of graded rational mixed Hodge structures, as is done for $\bbS$-modules in Getzler \cite{GetzlerPreprint} and Getzler--Pandharipande \cite[\S5]{GetzlerPandharipande}.
We will put $\nu_n$ for the $\bbS$-space supported in arity $n$, where it is given by $\Spec \C$ with trivial action of $S_n$. Note that
\[\mathsf{e}\left(\nu_n\right) = h_n. \]
Given an $\bbS$-space $\calX$ with $\calX(0) = \varnothing$, define an analogue of the exponential function $e^x - 1$ by
\[\Exp(\mathcal{X}) := \sum_{n > 0} \nu_n \circ \mathcal{X}. \]
Finally, given an $\bbS$-space $\calZ$, we define an $\bbS^2$-space $\Delta \calZ$ by the assignment
\[\Delta\calZ(m, n) := \Res^{S_{m + n}}_{S_m \times S_n} \calZ(m + n). \]
 We have
\begin{equation}\label{CoproductForHodge}
    \mathsf{e}(\Delta \calZ) = \Delta(\mathsf{e}(\calZ)),
\end{equation}
by (\ref{CoprodFrobenius}).


\section{Proof of Theorem \ref{GeneratingFunctionFormulas}}\label{MainProof}
Our main theorem is proven using the composition operations defined above. First, for each $g \geq 0$, define $\bbS^2$-spaces as follows:

\begin{equation}\label{open-hl-ssquared}
\calM_g^{\mathrm{hl}}(m,n) = \begin{cases} \calM_{g, m|n} &\text{ if } 2g - 2 + m + \text{min}(n, 1) > 0, \\
\varnothing &\text{ else},
\end{cases}
\end{equation}

\begin{equation}\label{closed-hl-ssquared}
\overline{\calM}_g^{\mathrm{hl}}(m,n) = \begin{cases} 
\overline{\calM}_{g, m|n} &\text{ if } 2g - 2 + m + \text{min}(n, 1) > 0, \\
\varnothing &\text{ else}.
\end{cases}
\end{equation}

We will also make use of the $\bbS$-spaces
\[\calM_g(n) = \begin{cases} 
\calM_{g,n} &\text{ if } 2g - 2 + n > 0, \\
\varnothing &\text{ else},
\end{cases} \]
and
\[\overline{\calM}_g(n) = \begin{cases} 
\overline{\calM}_{g,n} &\text{ if } 2g - 2 + n > 0, \\
\varnothing &\text{ else}.
\end{cases} \]

\begin{prop}\label{OpenModuliSpace}
We have
\[[\calM_g^{\mathrm{hl}}] = [\Delta\calM_g \circ_2 \Exp\left(\nu_1 \right)]\]
\end{prop}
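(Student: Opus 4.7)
My strategy is to exhibit a natural $(S_m \times S_n)$-equivariant isomorphism of varieties between the two arity-$(m,n)$ components, using the stratification of $\calM_{g, m|n}$ by the coincidence pattern of its light points.

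First I would unpack the right-hand side. By the plethysm rule $p_k^{(2)} \circ_2 p_1^{(2)} = p_k^{(2)}$, lifted to $\bbS^2$-spaces, $\calE := \Exp^{(2)}\bigl(\varsigma_1^{(2)}\bigr)$ has arity $(0, n)$ equal to $\Spec \C$ with trivial $S_n$-action for each $n \geq 1$ and is empty elsewhere. Consequently the $j$-fold $\boxtimes$-power $\calE^{\boxtimes j}(0, n)$ is the finite reduced $S_n$-scheme $\mathrm{OSP}(n, j)$ of ordered set partitions of $[n]$ into $j$ nonempty blocks, with $S_n$ acting by its permutation action on $[n]$. Since $\calE$ is supported on arities of the form $(0, \cdot)$, only $i = m$ contributes in (\ref{circtwo}), and so
\[(\Delta \calM_g \circ_2 \calE)(m, n) \;\cong\; \coprod_{j \geq 0} \bigl(\calM_{g, m+j} \times \mathrm{OSP}(n, j)\bigr) \big/ S_j,\]
where $S_j$ acts diagonally, simultaneously permuting the last $j$ marked points on $\calM_{g, m+j}$ and the blocks of an ordered set partition.

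Next I would construct the comparison isomorphism. Given a point $(C; p_1, \ldots, p_m; q_1, \ldots, q_n)$ of $\calM_{g, m|n}$, the light points determine a set partition of $[n]$ via the equivalence $i \sim i' \iff q_i = q_{i'}$. Choose any ordering $(A_1, \ldots, A_j)$ of its blocks, and let $r_k$ denote the common value of $q_i$ for $i \in A_k$. Heavy-light stability forces $p_1, \ldots, p_m, r_1, \ldots, r_j$ to be pairwise distinct smooth points of $C$. The assignment
\[(C; p_1, \ldots, p_m; q_1, \ldots, q_n) \ \longmapsto\ \bigl[(C; p_1, \ldots, p_m, r_1, \ldots, r_j),\, (A_1, \ldots, A_j)\bigr]\]
lands in the $S_j$-quotient and is independent of the chosen ordering; the inverse sets $q_i := r_k$ for $i \in A_k$. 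For each unordered partition $P$ of $[n]$ with $j$ blocks, this identifies the locally closed stratum of $\calM_{g, m|n}$ on which the coincidence type is $P$ with $\calM_{g, m+j}$, showing the bijection is an algebraic isomorphism.

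The $(S_m \times S_n)$-equivariance is immediate: $S_m$ permutes the heavy points on both sides, and the $S_n$-action on the left, which relabels $[n]$ and hence the light points, corresponds to the $S_n$-action on $\mathrm{OSP}(n, j)$ by permuting $[n]$ within blocks, which commutes with the $S_j$-quotient on the right. The main obstacle is combinatorial bookkeeping around the two simultaneous $S_j$- and $S_n$-actions: correctly identifying $\calE^{\boxtimes j}(0, n)$ with $\mathrm{OSP}(n, j)$ and verifying that the residual $S_n$-action on the quotient matches the natural permutation action on the light points. Once this is done the proposition reduces to the geometric observation that specifying $n$ light points on a smooth curve is the same as specifying a $j$-element set of distinct points together with an unordered partition of $[n]$ into $j$ nonempty blocks.
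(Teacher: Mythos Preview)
Your proof is correct and follows essentially the same approach as the paper: both compute $\Exp^{(2)}(\varsigma_1^{(2)})$ as $\sum_{n\geq 1}\varsigma_n^{(2)}$, reduce the $\circ_2$-composition to the $i=m$ term, and identify the result with the stratification of $\calM_{g,m|n}$ by the number $j$ of distinct light points. Your writeup is somewhat more explicit about the comparison map and its inverse, while the paper simply records the isomorphism of each stratum $\calZ_{m,j}$ and sums, but the content is the same.
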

\begin{proof}
Define the $\bbS$-space $\calY$ by \[\calY = \Exp\left(\nu_1\right).\]
We have that $\mathcal{Y}(0) = \varnothing$, while $\calY(n) = \nu_n$ for $n \geq 1$. For any $\bbS^2$-space $\calX$, we have
\begin{align*}
    (\calX \circ_2 \calY)(m,n) &= \coprod_{j = 0}^{\infty} \left(\calX(m, j) \times \calY^{\boxtimes j}(n) \right)/S_j \\&= \coprod_{j = 0}^{\infty} \left(\calX(m, j) \times \coprod_{\substack{{k_1 + \cdots + k_j = n}\\{k_r > 0 \,\forall\, r}}} \Ind_{S_{k_1} \times \cdots \times S_{k_j}}^{S_n} \Spec \C \right)/S_j.
\end{align*}

Now let us return to the $S_m \times S_n$ space $\calM_{g, m|n}$. This space admits a stratification: for $1 \leq j \leq n$, let $\calZ_{m,j} \subset \calM_{g, m|n}$ denote the locally closed stratum in which there are precisely $j$ distinct marked points among the last $n$. Then we can write
\begin{equation}\label{openstratum}
    \calZ_{m,j} \cong \left(\coprod_{\substack{{k_1 + \cdots + k_j = n}\\{k_r > 0 \, \forall \, r}}} \Res^{S_{m + j}}_{S_m \times S_j} \calM_{g, m + j} \times \Ind_{S_{k_1} \times \cdots \times S_{k_j}}^{S_n} \Spec \C\right)/S_j.
\end{equation} 
Since
\[[\calM_{g}^{\mathrm{hl}}(m, n)] = \sum_{j = 1}^{n} [\calZ_{m,j}], \]
we see that
\[[\calM_g^{\mathrm{hl}}] = [\Delta\calM_g \circ_2 \Exp\left(\nu_1\right)] \]
upon summing over $j$, $m$, and $n$ on both sides of (\ref{openstratum}).
\end{proof}

Towards proving our theorem for the compact moduli space $\Mbar_{g}^{\mathrm{hl}}$, it is useful to introduce an auxiliary moduli space.
\begin{defn}
We set
\[\Mbar_{g, n}^{(k)} \subset \Mbar_{g, n} \]
to be the locus of curves which have no rational tails whose support consists of any subset of the last $k$ markings.
We define an $\bbS^2$-space $\Mbar^\star_g$ by
\[\Mbar^{\star}_{g}(m, n) : = \Mbar_{g, m + n}^{(n)}. \]
\end{defn}
The following proposition expresses the $\bbS^2$-space $\Delta \Mbar_g$ in terms of $\Mbar^{\star}_g$ and the composition operation. The basic idea has appeared in the literature before, in the main theorem of ~\cite{GetzlerSemiClassical}; see also \cite{semiclassicalremark}. For an $\bbS$-space $\calZ$, we put
\[\delta \calZ(n) := \Res^{S_{n + 1}}_{S_n} \calZ(n+1); \]
note that
\[\mathsf{e}(\delta \calZ) = \frac{\partial \mathsf{e}(\calZ)}{\partial p_1}, \]
by \cite[Proposition 8.10]{GetzlerKapranov}.

\begin{prop}\label{RationalTails}
Let $\Mbar_0^{\dagger}$ denote the $\bbS$-space that is empty in arities $0$ and $1$, supports $\Spec \C$ in arity $2$, and supports $\Mbar_{0, n }$ in arities $n \geq 3$. Then we have
\[\left[\Delta\Mbar_g\right]  = \left[\Mbar^\star_g \circ_2 \delta \Mbar_0^{\dagger}\right].   \]
\end{prop}
\begin{proof}
Let $\calX$ denote the class on the right-hand side of the claimed equality, and note that $\calY= \delta \Mbar_0^{\dagger}$ is the $\bbS$-space which supports $\varnothing$ in arity $0$, $\nu_1$ in arity $1$, and $\mathrm{Res}^{S_{n + 1}}_{S_n} \M_{0, n + 1}$ in arities $n \geq 2$.
A point of the $S_n$-space $\calY^{\boxtimes j}(n)$ corresponds to an ordered tuple of varieties
\[(X_1, \ldots, X_j) \] such that:
\begin{enumerate}
    \item for all $i$, $X_i$ is either $\Spec \C$ or a pointed stable curve of arithmetic genus zero whose marked points are labelled by $\{0, \ldots, r_i\}$ for some $r_i \geq 2$;
    \item there is a chosen bijection: \[\{X_i \mid X_i = \Spec \C \} \cup \{p \mid p \text{ is a nonzero marked point of }X_j \text{ for some }j \} \to \{1, \dots, n\}.\]
\end{enumerate}
The group $S_n$ acts on the chosen bijection, and $S_j$ acts by reordering the tuple. Now recall that

\begin{align*}
\calX(m,n) &= \coprod_{j = 0}^{\infty} \left(\Mbar^\star_g(m, j) \times \calY^{\boxtimes j}(n)\right)/S_j
\\&= \coprod_{j = 0}^{n} \left(\Mbar^\star_g(m, j) \times \calY^{\boxtimes j}(n)\right)/S_j.
\end{align*}
We can see that the class $[\calX(m, n)]$ in the Grothendieck group of $(S_m \times S_n)$-varieties is equal to $[\Res^{S_{m + n}}_{S_m \times S_n} \Mbar_{g, m+n}]$. Indeed, for fixed $j$, one takes the ordered tuple $(X_1, \ldots, X_j)$ represented by a point of $\calY^{\boxtimes j}(n)$, and glues in the indicated order to the $j$ distinguished marked points of a pointed curve in $\Mbar^\star_g(m, j)$. This has the effect of adding rational tails which support subsets of the final $n$ markings. Taking the quotient by the diagonal action of $S_j$ makes this gluing procedure into an isomorphism between $\left(\Mbar^\star_g(m, j) \times \calY^{\boxtimes j}(n)\right)/S_j$ and the stratum of $\Res^{S_{m + n}}_{S_m \times S_n} \Mbar_{g, m + n}$ where there are exactly $j$ rational tails supporting subsets of the last $n$ markings, where we allow for ``trivial" rational tails which are just marked points.
\end{proof}
The final ingredient of the proof of Theorem \ref{GeneratingFunctionFormulas} is the following formula, analogous to Proposition \ref{OpenModuliSpace}.
\begin{prop}\label{NRTtoHL}
We have
\[[\Mbar_g^\star \circ_2 \Exp\left(\nu_1\right)] = [\Mbar_g^{\mathrm{hl}}] \]
\end{prop}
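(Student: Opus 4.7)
The proof will follow the structure of Proposition \ref{OpenModuliSpace}, with the essential new ingredient being the combinatorial characterization of stability provided by Lemma \ref{CombinatorialStability}. Setting $\calY = \Exp^{(2)}(\varsigma_1^{(2)})$, exactly as in the proof of Proposition \ref{OpenModuliSpace} we have $\calY(m, n) = \varnothing$ unless $m = 0$, so the factor $\calY^{\boxtimes j}(m-i, n)$ appearing in (\ref{circtwo}) vanishes unless $i = m$. The composition therefore collapses to
\[(\Mbar_g^\star \circ_2 \calY)(m, n) = \coprod_{j \geq 0} \Big(\Mbar^{(j)}_{g, m+j} \times \coprod_{\substack{k_1 + \cdots + k_j = n \\ k_r > 0 \,\forall\, r}} \Ind_{S_{k_1}\times \cdots \times S_{k_j}}^{S_n} \Spec \C\Big)\Big/S_j,\]
a point of which is a pair consisting of a curve in $\Mbar^\star_g(m, j) = \Mbar^{(j)}_{g, m+j}$ together with a set partition of $\{1, \ldots, n\}$ into $j$ unordered, nonempty blocks carrying a bijection with the last $j$ markings.

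In parallel, I would stratify the space $\Mbar_{g, m|n}$ by the coincidence pattern of the light markings: for each $1 \leq j \leq n$, let $\calW_j \subset \Mbar_{g, m|n}$ be the locally closed stratum on which the final $n$ markings occupy exactly $j$ distinct points of the underlying curve. There is a natural \emph{collapsing} morphism which sends a heavy/light curve to the pointed curve obtained by replacing each cluster of coinciding light points with a single marked point of weight one, together with the induced set partition on $\{1, \ldots, n\}$. The inverse glues the partition data back in by placing the light points indexed by the $r$th block at the $(m+r)$th marked point of the distinctly marked curve.

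The main obstacle, and the heart of the argument, is to identify the image of this collapsing morphism precisely with $\Mbar^\star_g(m, j)$. This is where Lemma \ref{CombinatorialStability} is used: since collapsing clusters of light points does not alter the dual graph of $C$, the original heavy/light curve has no rational tail supported only on indices in $\{m+1, \ldots, m+n\}$ if and only if the collapsed $(m+j)$-pointed curve has no rational tail supported only on indices in $\{m+1, \ldots, m+j\}$. Ampleness of the twisted canonical is automatic for the collapsed curve since the total weight at each cluster location is raised to one, preserving positivity. Equivariance of the stratum identification with respect to the $S_m \times S_n$-action is immediate from the construction, and passing to the $S_j$-quotient accounts for the freedom to reorder the $j$ cluster locations. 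Finally, writing $\Mbar_g^{\mathrm{hl}}(m, n) = \coprod_{j = 1}^n \calW_j$ and summing yields the claimed equality of $\bbS^2$-spaces.
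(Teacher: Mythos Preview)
Your proposal is correct and follows essentially the same approach as the paper: stratify $\Mbar_{g,m|n}$ by the number $j$ of distinct light points, and identify each stratum with the $S_j$-quotient of $\Mbar_{g,m+j}^{(j)}$ times the set of ordered set partitions, invoking Lemma~\ref{CombinatorialStability} to match the no-rational-tail condition. Your write-up is in fact more explicit than the paper's about the collapsing/expansion maps and why ampleness is preserved in the forward direction; the paper simply states the stratum isomorphism and cites Lemma~\ref{CombinatorialStability}.
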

\begin{proof}
The proof is essentially the same as that of Proposition \ref{OpenModuliSpace}: stratify $\Mbar_{g, m|n}$ by
\[\calW_{m, j} = \{(C, p_1, \ldots, p_{m + n}) \mid \text{ there are } j \text{ distinct points among the last }n  \}, \]
and observe that
\[\calW_{m ,j} \cong \coprod_{\substack{{k_1 + \cdots + k_j = n}\\k_r > 0\,\forall\,r }} \left(\Mbar_{g, m + j}^{(j)} \times \Ind_{S_{k_1} \times \cdots \times S_{k_j}}^{S_n} \Spec \C \right)/S_j, \]
by Lemma \ref{CombinatorialStability}. The proof is complete upon summing over $m$ and $j$.
\end{proof}

We can now prove the main theorem.
\begin{proof}[Proof of Theorem \ref{GeneratingFunctionFormulas}]
The first part of the theorem follows from taking $\mathsf{e}(\cdot)$ on both sides of Proposition \ref{OpenModuliSpace} and using both (\ref{CompositionWorks}) and (\ref{CoproductForHodge}). From Proposition \ref{RationalTails} and (\ref{CoproductForHodge}), we see that
\begin{equation}\label{ApplyingHodgeSeries}
\Delta(\overline{\mathsf{a}}_g) = \mathsf{e}(\Mbar^{\star}_g) \circ_2 \left(p_1 + \frac{\partial \overline{\mathsf{b}}_0}{\partial p_1} \right)^{(2)},
\end{equation}
as $\mathsf{e}(\nu_1) = p_1$ and $\mathsf{e}(\delta \Mbar_0) = \partial \overline{\mathsf{b}}_0/\partial p_1$. The symmetric functions
\[p_1 + \frac{\partial \overline{\mathsf{b}}_0}{\partial p_1} \mbox{ and } p_1 - \frac{\partial \mathsf{b}_0}{\partial p_1} \]
are plethystic inverses; this is because $\mathsf{b}_0$ and $\overline{\mathsf{b}}_0$ are \textit{Legendre transforms} of one another, as explained in \cite{GetzlerGenusZero}. We thus perform the operation
\[\circ_2 \left(p_1 - \frac{\partial \mathsf{b}_0}{\partial p_1}\right)^{(2)} \]
on both sides of (\ref{ApplyingHodgeSeries}) to see that
\begin{equation*}
    \Delta(\overline{\mathsf{b}}_g) \circ_2  \left(p_1 - \frac{\partial \mathsf{b}_0}{\partial p_1}\right)^{(2)} = \mathsf{e}(\Mbar^{\star}_g).
\end{equation*}
The theorem is now proven upon applying Proposition \ref{NRTtoHL}. 
\end{proof}
To prove Corollary \ref{NumericalFormula}, one uses the rank morphisms (\ref{Rank}) and (\ref{BisymmetricRank}). We apply $\mathrm{rk}$ to both sides of Theorem \ref{GeneratingFunctionFormulas}, and use that
\[\mathrm{rk}\left(\Exp\left(p_1\right)^{(2)} \right) = e^{y} - 1. \]
The corollary follows from the formula
\begin{align*}
    \mathrm{rk}\left(\left(p_1 - \frac{\partial \mathsf{b}_0}{\partial p_1}\right)^{(2)}\right) &= y - \sum_{n \geq 2} E_{\calM_{0, n + 1}}(u, v) \cdot \frac{y^n}{n!} \\&= y + \frac{(y + 1)^{uv}- uvy - 1}{uv - u^2v^2},
\end{align*}
due to Getzler \cite{GetzlerGenusZero}.

\section{Calculations}\label{calculations}
\subsection{The Euler characteristic of $\Mbar_{1, 0|n}$} We begin this section by proving two results on the topological Euler characteristic $\chi(\Mbar_{1, 0|n})$, which may be viewed as corollaries of Theorem \ref{GeneratingFunctionFormulas} and Getzler's semi-classical approximation \cite{GetzlerSemiClassical}. The space $\Mbar_{1,0|n}$ is interesting to compare with $\Mbar_{1, n}$, as it parameterizes curves that have no rational tails. The first result determines the generating function for the numbers $\chi(\Mbar_{1, 0|n})$.
\begin{prop}
Define
\[f(y) : = \sum_{n \geq 1} \chi(\Mbar_{1, 0|n})\frac{y^n}{n!}. \]
Then
\[f(y) = -\frac{y}{12} - \frac12\log(1-y) + \varepsilon \circ (e^y-1), \] 
where
\[\varepsilon(y) := \frac{1}{12}(19y + 23y^2/2 + 10y^3/3 + y^4/2).  \]
\end{prop}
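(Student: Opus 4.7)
The strategy is to apply the numerical Corollary \ref{NumericalFormula} (or equivalently Proposition \ref{NRTtoHL}) for $g=1$ at Euler characteristic, and then invoke Getzler's semi-classical approximation \cite{GetzlerSemiClassical} to compute the relevant genus one generating function. First, I would specialize Corollary \ref{NumericalFormula} by setting $u=v=1$. The change-of-variables function $z$ requires evaluating the indeterminate form $(e^{uvy}-uv\,e^y+uv-1)/(uv-u^2v^2)$ as $uv\to 1$; a single application of L'Hôpital's rule in the variable $t=uv$ gives the limit $(1-y)e^y-1$, so that at $u=v=1$ one has $z|_{u=v=1}=x+(2-y)e^y-2$.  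Setting additionally $x=0$ (to extract the $m=0$ specialization) yields
\[f(y)=\bar b_1\bigl((2-y)e^y-2\bigr)\bigg|_{u=v=1}=\bar b_1(L(y)),\qquad L(y):=(2-y)e^y-2,\]
where $\bar b_1(x)=\sum_{n\geq1}\chi(\Mbar_{1,n})x^n/n!$ is the Euler characteristic generating function for the classical Deligne--Mumford--Knudsen spaces.

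\textbf{Alternative reduction.} Equivalently (and more transparently), one can use Proposition \ref{NRTtoHL} directly: taking $\mathsf e(\cdot)$ of both sides, applying the rank map \eqref{BisymmetricRank} at $u=v=1$, and restricting to $m=0$ converts $\boxtimes$-products into ordinary multiplication and $\circ_2$ into composition in $y$, giving
\[f(y)=F(e^y-1),\qquad F(x):=\sum_{n\geq 1}\chi\bigl(\Mbar_1^\star(0,n)\bigr)\,\frac{x^n}{n!}.\]
Since $\Mbar_1^\star(0,n)=\Mbar_{1,n}^{(n)}$ is the open locus in $\Mbar_{1,n}$ of curves without \emph{any} rational tails, we are reduced to computing the Euler characteristic generating function of the rational-tails-free genus one moduli spaces.

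\textbf{Input from semi-classical approximation.} The next step is to plug in Getzler's formula for the (equivariant) Hodge series of $\Mbar_{1,n}$ in \cite{GetzlerSemiClassical} (or, via the above reduction, for the rational-tail-free open locus), and specialize to Euler characteristics. Getzler's recursion expresses the genus one contribution as a sum of two geometrically distinct pieces: a ``self-gluing'' piece coming from stable graphs with a loop at a single genus zero vertex (a nodal rational curve with two points glued), which produces a $\log$-type term, together with a ``modular'' piece coming from $\Mbar_{1,1}$ (with orbifold Euler characteristic $1/12$), and finally a sum of tree-level contributions coming from rational components attached to an elliptic vertex. After passing to the Euler characteristic rank and combining with the substitution $y\mapsto e^y-1$ from the previous step, the log-type contribution produces the term $-\tfrac12\log(1-y)$, the modular contribution produces $-y/12$, and the tree-level contribution collapses to $\varepsilon\circ(e^y-1)$, where $\varepsilon$ is a polynomial of degree four coming from the small number of combinatorial types that glue finitely many rational tails to a single elliptic vertex.

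\textbf{Main obstacle.} The routine part is the limit calculation and the bookkeeping of which symmetric-function generating series becomes which ordinary generating function under the rank map. The substantive step is reading off Getzler's semi-classical formula at Euler characteristic and verifying that the sum of all stable graph contributions collapses exactly to $-y/12-\tfrac12\log(1-y)+\varepsilon(e^y-1)$ with the stated polynomial $\varepsilon$; in particular, one must check that the coefficients of the polynomial $\varepsilon$ match those predicted by the Euler characteristics of the finitely many ``tree-level'' strata of $\Mbar_{1,0|n}$ of low complexity (genus one vertex with one, two, three, or four rational tails). This reduces to a finite computation of $\chi(\Mbar_{1,k})$ for small $k$, which can be done directly or extracted from Getzler's tables.
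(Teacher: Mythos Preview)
Your setup is correct: both the direct use of Corollary~\ref{NumericalFormula} (giving $f(y)=\bar b_1(L(y))$ with $L(y)=(2-y)e^y-2$) and the alternative via Proposition~\ref{NRTtoHL} (giving $f(y)=F(e^y-1)$ with $F$ the Euler-characteristic generating function for the rational-tail-free loci $\Mbar_{1,n}^{(n)}$) are valid starting points, and your L'H\^opital computation is right.

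Where you diverge from the paper is in how to finish. The paper does not use Corollary~\ref{NumericalFormula}; it applies the rank map to Theorem~\ref{GeneratingFunctionFormulas} itself, keeping the two substitutions separate, so that
\[
f(y)\;=\;\bar b_1\Big|_{y}\;\circ\;\Bigl(y-\sum_{n\ge 2}\chi(\calM_{0,n+1})\tfrac{y^n}{n!}\Bigr)\;\circ\;(e^y-1).
\]
The middle factor is $g^{-1}$, where $g(y)=y+\sum_{n\ge 2}\chi(\Mbar_{0,n+1})y^n/n!$. The point you are missing---and the reason your ``main obstacle'' is a phantom---is that Getzler's semi-classical formula \cite[Theorem~4.1]{GetzlerSemiClassical} is stated \emph{exactly} in the form
\[
\bar b_1(y)\;=\;-\tfrac{1}{12}\log\bigl(1+g(y)\bigr)\;-\;\tfrac12\log\bigl(1-\log(1+g(y))\bigr)\;+\;\varepsilon\bigl(g(y)\bigr),
\]
i.e.\ as an explicit function precomposed with $g$. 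So $\bar b_1\circ g^{-1}$ strips off $g$ with no computation, and composing the result with $e^y-1$ collapses $\log(1+(e^y-1))=y$ to give the statement instantly. No graph enumeration, no checking of the coefficients of $\varepsilon$, no ``finite computation of $\chi(\Mbar_{1,k})$'' is needed: $\varepsilon$ is handed to you by Getzler.

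Your Corollary~\ref{NumericalFormula} route can be completed the same way once you notice that $L=g^{-1}\circ(e^y-1)$ (equivalently $g(L(y))=e^y-1$), but merging the two substitutions into a single $L$ obscures precisely the cancellation that makes the proof a one-liner.
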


\begin{proof}
Apply $\mathrm{rk}$ to both sides of Theorem \ref{GeneratingFunctionFormulas} and consider the $x$-degree 0 part. We obtain an equality
\begin{equation*}
    \sum_n E_{\Mbar_{g,0|n}}(u,v) \frac{y^n}{n!} = \left(\sum_n E_{\Mbar_{g,n}}(u,v) \frac{y^n}{n!}\right) \circ \left(y-\sum_{n\geq 2} E_{\M_{0,n+1}}(u,v) \frac{y^n}{n!}\right) \circ (e^y-1).
\end{equation*}
We substitute $u=v=1$ and $g=1$:
\begin{equation*}
    \sum_n \chi(\Mbar_{1,0|n}) \frac{y^n}{n!} = \left(\sum_n \chi(\Mbar_{1,n}) \frac{y^n}{n!}\right) \circ \left(y-\sum_{n\geq 2} \chi(\M_{0,n+1}) \frac{y^n}{n!}\right) \circ (e^y-1).
\end{equation*}
Let
\[g(y) := y + \sum_{n \geq 2} \chi(\Mbar_{0, n+1}) \frac{y^n}{n!}, \]
so \[g(y) \circ \left(y-\sum_{n\geq 2} \chi(\M_{0,n+1}) \frac{y^n}{n!}\right) = \left(y-\sum_{n\geq 2} \chi(\M_{0,n+1}) \frac{y^n}{n!}\right) \circ g(y) = y,\]
by \cite{GetzlerGenusZero}. By \cite[Theorem 4.1]{GetzlerSemiClassical} we have 
\begin{equation*}
    \sum_n \chi(\Mbar_{1,n}) \frac{y^n}{n!}=-\frac{1}{12}\log(1+g(y)) - \frac12\log(1-\log(1+g(y))) + \varepsilon(g(y)), 
\end{equation*}
so we derive
\begin{align*}
    \sum_n \chi(\Mbar_{1,0|n}) \frac{y^n}{n!}&=\left(\sum_n \chi(\Mbar_{1,n}) \frac{y^n}{n!}\right) \circ \left(y-\sum_{n\geq 2} \chi(\M_{0,n+1}) \frac{y^n}{n!}\right) \circ (e^y-1) \\
    &=\left(-\frac{1}{12}\log(1+y) - \frac12\log(1-\log(1+y)) + \varepsilon(y)\right) \circ (e^y-1) \\
    &=-\frac{y}{12} - \frac12\log(1-y) + \varepsilon \circ (e^y-1),
\end{align*}
as claimed.
\end{proof}
The following corollary indicates that eliminating rational tails reduces the topological complexity of the moduli space.
\begin{cor}
We have the asymptotic formulas
\[\chi(\Mbar_{1, 0|n}) \sim \frac{(n - 1)!}{2}, \]
and
\[
\frac{\chi(\Mbar_{1,0|n})}{\chi(\Mbar_{1,n})} \sim 2(e - 2)^n.
\]
In particular, \[\lim_{n \to \infty} \frac{\chi(\Mbar_{1,0|n})}{\chi(\Mbar_{1,n})} = 0. \]
\label{cor: chiComparison}
\end{cor}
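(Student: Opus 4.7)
The plan is to extract both claims from the closed form for $f(y)$ in the previous proposition, together with a parallel singularity analysis for the generating function of $\chi(\overline{\mathcal{M}}_{1,n})$.

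For the asymptotic $\chi(\overline{\mathcal{M}}_{1,0|n}) \sim (n-1)!/2$, I would read off $[y^n/n!]$ in
\[ f(y) = -\frac{y}{12} - \frac{1}{2}\log(1-y) + \varepsilon \circ (e^y - 1) \]
term by term. The term $-\tfrac{1}{2}\log(1-y) = \tfrac12\sum_{n\ge1} y^n/n$ contributes exactly $(n-1)!/2$. The linear term $-y/12$ contributes $0$ for $n\ge 2$. For $\varepsilon(e^y-1)$, use that $\varepsilon$ is a polynomial of degree $4$ and $(e^y-1)^k = \sum_{n\ge k} k!\, S(n,k)\, y^n/n!$, where $S(n,k)$ is the Stirling number of the second kind. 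Its coefficient of $y^n/n!$ is therefore a fixed linear combination of $S(n,1),\ldots,S(n,4)$, each bounded by $k^n \le 4^n$. Since $(n-1)!$ dwarfs any fixed exponential, the first claim follows.

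For the second limit, I would analyze the dominant singularity of
\[ C(y) := \sum_n \chi(\overline{\mathcal{M}}_{1,n})\frac{y^n}{n!} = -\frac{1}{12}\log(1+g(y)) - \frac{1}{2}\log\bigl(1-\log(1+g(y))\bigr) + \varepsilon(g(y)), \]
from Getzler's semi-classical formula, where $g$ is the compositional inverse of $\phi(y) := 2y - (y+1)\log(y+1)$. This $\phi$ arises by taking the $u=v=1$ specialization of Getzler's expression for $p_1 - \partial\mathsf{b}_0/\partial p_1$, which is $y + \frac{(y+1)^{uv} - uvy - 1}{uv - u^2v^2}$; a short limit computation (L'Hôpital in $t=uv$) at $t=1$ produces $\phi$. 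Now $\phi'(y) = 1 - \log(y+1)$ vanishes at $y_* := e-1$ with $\phi''(y_*) = -1/e \ne 0$ and $\phi(y_*) = e-2$, so $g$ has a square-root branch point at $z_* := e-2$ with $g(z_*) = e-1$. Consequently $1-\log(1+g(y)) \to 0$ as $y\to z_*$, and a brief Taylor expansion shows
\[ -\tfrac{1}{2}\log\bigl(1-\log(1+g(y))\bigr) \sim -\tfrac{1}{4}\log(z_* - y). \]
This is the dominant singularity of $C(y)$ since $\varepsilon(g)$ and $\log(1+g)$ remain bounded nearby. By the Flajolet--Odlyzko transfer theorem,
\[ \chi(\overline{\mathcal{M}}_{1,n}) \sim \frac{(n-1)!}{4(e-2)^n}. \]

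Combining with the first part gives
\[ \frac{\chi(\overline{\mathcal{M}}_{1,0|n})}{\chi(\overline{\mathcal{M}}_{1,n})} \sim 2(e-2)^n \to 0, \]
since $e-2 < 1$. The main obstacle is the singularity analysis of $C(y)$: one must correctly locate the branch point of $g$, verify that its image under Getzler's nested logarithm is the dominant singularity of $C$, and extract its precise logarithmic type before invoking a transfer theorem. Once this is done, both claims of the corollary follow immediately.
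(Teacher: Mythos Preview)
Your argument is correct and follows the same overall strategy as the paper: extract the asymptotic of $\chi(\overline{\mathcal{M}}_{1,0|n})$ from the closed form for $f(y)$, then compare against the known asymptotic of $\chi(\overline{\mathcal{M}}_{1,n})$. The difference is only in how much you do by hand. For the first claim, the paper observes that $f(y)+\tfrac12\log(1-y)$ is entire and invokes Wilf's theorem to conclude that the coefficients of $-\tfrac12\log(1-y)$ dominate; your explicit bound via Stirling numbers is a perfectly good elementary substitute. For the second claim, the paper simply cites Getzler's own asymptotic $\chi(\overline{\mathcal{M}}_{1,n})\sim (n-1)!/\bigl(4(e-2)^n\bigr)$ from \cite[Corollary~4.2]{GetzlerSemiClassical}, whereas you rederive it by carrying out the singularity analysis of $C(y)$ yourself: locating the square-root branch point of $g$ at $e-2$, identifying the resulting logarithmic singularity of $-\tfrac12\log(1-\log(1+g))$, and applying a transfer theorem. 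Your computation is correct (in particular your identification of $\phi$ as the $uv\to 1$ limit and the location of its critical point), and it has the advantage of being self-contained, at the cost of reproving something already in the literature.
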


\begin{proof}
If we think of $y$ as a complex variable, the function $f(y)-(- \log(1 - y)/2)$ is an entire function. By \cite[Theorem 2.4.3]{wilf}, the values $\chi(\Mbar_{1,0|n})/n!$ are approximated by the power series coefficients of the function $- \frac12\log(1-y)$ about the origin. Therefore, we have
\begin{equation*}
    \chi(\Mbar_{1,0|n}) \sim \frac{(n-1)!}{2}.
\end{equation*}
By \cite[Corollary 4.2]{GetzlerSemiClassical},
\begin{equation*}
    \chi(\Mbar_{1,n}) \sim \frac{(n-1)!}{4(e-2)^n}(1+Cn^{-1/2} + O(n^{-3/2})) \sim \frac{(n-1)!}{4(e-2)^n},
\end{equation*} where $C$ is a constant. Therefore
\begin{equation*}
    \frac{\chi(\Mbar_{1,0|n})}{\chi(\Mbar_{1,n})} \sim \frac{(n-1)!}{2}\frac{4(e-2)^n}{(n-1)!}= 2(e- 2)^n,
\end{equation*}
as we wanted to show.
\end{proof}

Corollary \ref{cor: chiComparison} implies that when $n$ is large, the ``all light points" moduli space $\Mbar_{1, 0|n}$ has much less cohomology than the Deligne--Mumford moduli space $\Mbar_{1, n}$. It is natural to ask whether the same holds for higher genus.
\begin{quest}\label{asymptoticsQuestion}
Find an asymptotic formula for the quotient
\[\frac{\chi(\Mbar_{g, 0|n})}{\chi(\Mbar_{g, n})} \]
for all $g \geq 2$. Do we have
\[ \lim_{n \to \infty} \frac{\chi(\Mbar_{g, 0|n})}{\chi(\Mbar_{g, n})} = 0 \]
for all such $g$?

\end{quest}

\subsection{Tables of data}
We conclude the paper by including three tables containing sample calculations, done with SageMath, based on Theorem \ref{GeneratingFunctionFormulas}.\footnote{Sage code for these computations is available at \url{https://sites.google.com/view/siddarthkannan/research}} The first, Table \ref{EquivariantGenusOneTable}, contains the $(S_m \times S_n)$-equivariant Hodge polynomial of $\Mbar_{1, m|n}$ for $m + n \leq 5$. These rely on the calculation of the series $\overline{\mathsf{b}}_1$ by Getzler \cite{GetzlerSemiClassical}. For $n \leq 10$, the mixed Hodge structures on the cohomology groups of the moduli space $\Mbar_{1, n}$ are polynomials in $\mathsf{L} = H^{2}_c(\A^1;\C)$, the mixed Hodge structure of the affine line. A consequence is that $\Mbar_{1, n}$ has only even dimensional cohomology for $n \leq 10$, and only the diagonal Hodge numbers $\dim H^{p,p}$ are nonzero. By Theorem \ref{GeneratingFunctionFormulas}, the same is true for $\Mbar_{1, m|n}$ for $m + n \leq 10$. Therefore, Table \ref{EquivariantGenusOneTable} displays the equivariant Poincar{\'e} polynomial \[E^{S_m \times S_n}_{\Mbar_{g, m|n}}(t, t),\] and the Hodge polynomial can be recovered by setting $t^2 = uv$.

Table \ref{NumericalGenusOneTable} contains the non-equivariant Hodge polynomial of $\Mbar_{1, 0|n}$ for $n \leq 11$, computed with Corollary \ref{NumericalFormula} and Getzler's calculation of $\overline{b}_1$. 
By Corollary \ref{cor: chiComparison} and (\ref{PreciseAsymptoticComparison}), one might expect $\Mbar_{1,0|n}$ to have less cohomology than $\Mbar_{1, n}$ (this is not a direct consequence; both spaces may have odd cohomology). Indeed, comparing with the table \cite[p.491]{GetzlerSemiClassical}, we observe that $\dim H^*(\Mbar_{1, 0|10}) = 232,076$ while $\dim H^*(\Mbar_{1, 10}) = 16,275,872$. One also notes that just as in the case of $\Mbar_{1, 11}$, the space $\Mbar_{1, 0|11}$ has odd-dimensional cohomology; this is true of $\Mbar_{1, m|n}$ whenever $m + n = 11$.

Finally, Table \ref{WeightZeroGenusTwoTable} contains the $(S_m \times S_n)$-equivariant compactly supported weight zero Euler characteristic of $\calM_{2, m|n}$ for $m + n \leq 6$, which is equal to
\[E^{S_m \times S_n}_{\calM_{2, m|n}}(0,0), \]
the constant term of the Hodge--Deligne polynomial. We also include the numerical weight zero Euler characteristic. This table was computed using the first part of Theorem \ref{GeneratingFunctionFormulas}, together with the formula of Chan et al. for $E_{\calM_{g, n}}^{S_n}(0,0)$ \cite{CFGP}. We also note that this table and our techniques apply to compute the equivariant Euler characteristic
\[\chi^{S_m \times S_n}(\Delta_{g, m|n}) := \sum_{i} (-1)^i \ch_{m|n}(H^i(\Delta_{g, m|n};\Q)) \in \Lambda^{(2)}, \]
where $\Delta_{g, m|n}$ is the tropical heavy/light Hassett space, studied in \cite{CHMRtropical, tropicalhassett1,tropicalhassett, KKLChow}. Indeed, one has
\[\chi^{S_m \times S_n}(\Delta_{g, m|n}) = s_m^{(1)}s_n^{(2)} - E^{S_m \times S_n}_{\calM_{g, m|n}}(0,0) \]
when $\Delta_{g, m|n}$ is connected, which holds when $g \geq 1$, and when $g = 0$ and $m + n > 4$. See \cite[\S 4]{tropicalhassett}. It is natural to ask whether there is a way to calculate $E^{S_m \times S_n}_{\calM_{g, m|n}}(0,0)$ which does not rely on the formula of \cite{CFGP}, together with plethysm.

\begin{quest}\label{weightzeroQuestion}
Can the graph-theoretic methods of \cite{CFGP} be adapted to give a closed formula for the weight zero compactly supported Euler characteristic $E^{S_m \times S_n}_{\calM_{g, m|n}}(0,0)$?
\end{quest}

We exclude the case $n = 1$ from Tables \ref{EquivariantGenusOneTable} and \ref{WeightZeroGenusTwoTable}, as
\[E^{S_{m} \times S_1}_{\Mbar_{g, m|1}}(u,v) = \left( \frac{\partial E^{S_{m + 1}}_{\Mbar_{g, m + 1}}(u,v) }{\partial p_1} \right)^{(1)} \cdot s_{1}^{(2)}, \]
and the analogous formula holds for the open moduli spaces.


\begin{landscape}
\renewcommand{\arraystretch}{2}
\begin{table}[]
\begin{tabular}{|c|l|}
\hline
$\left(m,n\right)$                   & \multicolumn{1}{c|}{$E^{S_m \times S_n}_{\Mbar_{1, m|n}}\left(t, t\right)$}                                                                                                  \\ \hline
$\left(0,2\right)$                   & $\left(t^4+2t^2+1\right)s_2^{\left(2\right)}$                                                                                                                                \\ \hline
$\left(0, 3\right)$                  & $\left(t^4+t^2\right)s_{2, 1}^{\left(2\right)} + \left(t^6+2t^4+2t^2+1\right)s_3^{\left(2\right)}$                                                                                                 \\ \hline
$\left(1, 2\right)$                  & $\left(t^4+t^2\right)s_1^{\left(1\right)}s_{1,1}^{\left(2\right)} + \left(t^6+4t^4 + 4t^2+1\right)s_1^{\left(1\right)}s_2^{\left(2\right)}$                                                                              \\ \hline
$\left(0, 4\right)$                  & $\left(t^6+2t^4+t^2\right)s_{2, 2}^{\left(2\right)} + \left(t^6+2t^4+t^2\right)s_{3, 1}^{\left(2\right)} + \left(t^8+2t^6+3t^4+2t^2+1\right)s_4^{\left(2\right)}$                                                        \\ \hline
$\left(1, 3\right)$                  & $\left(3t^6+6t^4+3t^2\right)s^{\left(1\right)}_1 s^{\left(2\right)}_{2, 1} + \left(t^8+5t^6+9t^4+5t^2+1\right)s_1^{\left(1\right)} s_3^{\left(2\right)}$                                                                \\ \hline
$\left(2, 2\right)$                  & $\left(\left(t^6+2t^4+t^2\right)s_{1, 1}^{\left(1\right)} + \left(2t^6+4t^4+2t^2\right)s^{\left(1\right)}_{2}\right)s^{\left(2\right)}_{1,1} + \left(\left(2t^6+4t^4+2t^2\right)s^{\left(1\right)}_{1, 1} + \left(t^8+7t^6+13t^4+7t^2+1\right)s^{\left(1\right)}_{2}\right)s^{\left(2\right)}_{2}$                                                          \\ \hline
$\left(0, 5\right)$                  & $\left(t^6+t^4\right)s_{2,2,1}^{\left(2\right)} + \left(t^8+3t^6+3t^4+t^2\right)s_{3, 2}^{\left(2\right)} + \left(t^8+3t^6+3t^4+t^2\right)s_{4, 1}^{\left(2\right)} + \left(t^{10}+2t^8+3t^6+3t^4+2t^2+1\right)s_5^{\left(2\right)}$            \\ \hline
\multirow{2}{*}{$\left(1, 4\right)$}                 & $\left(2t^6+2t^4\right)s_1^{\left(1\right)}s_{2, 1, 1}^{\left(2\right)} + \left(2t^8+8t^6+8t^4+2t^2\right)s_1^{\left(1\right)}s_{2, 2}^{\left(2\right)}$                                                                 \\
                          & \phantom{this is space}$+\left(4t^8+14t^6+14t^4+4t^2\right)s_1^{\left(1\right)}s_{3,1}^{\left(2\right)} + \left(t^{10}+6t^8+15t^6+15t^4+6t^2+1\right)s_1^{\left(1\right)}s_4^{\left(2\right)}$                                                    \\ \hline
\multirow{2}{*}{$\left(2, 3\right)$}                 & $\left(t^6+t^4\right)s^{\left(1\right)}_{1, 1}s^{\left(2\right)}_{1, 1, 1} + \left(t^6+t^4\right)s^{\left(1\right)}_{2}s^{\left(2\right)}_{1, 1, 1} + \left(\left(2t^8+9t^6+9t^4+2t^2\right)s^{\left(1\right)}_{1, 1}+\left(5t^8+20t^6+20t^4+5t^2\right)s^{\left(1\right)}_{2}\right)s^{\left(2\right)}_{2, 1}$                                              \\
\multicolumn{1}{|l|}{}    & \phantom{this is space}$ +\left(\left(3t^8+11t^6+11t^4+3t^2\right)s^{\left(1\right)}_{1, 1} + \left(t^{10}+9t^8+26t^6+26t^4+9t^2+1\right)s^{\left(1\right)}_{2}\right)s^{\left(2\right)}_{3}$ \\ \hline
\multirow{2}{*}{$\left(3, 2\right)$}                   & $t^{10} s_3^{\left(1\right)}s_2^{\left(2\right)} + t^8\left(2s_{2,1}^{\left(1\right)} + 3s_3^{\left(1\right)}\right)s_{1,1}^{\left(2\right)} + \left(5s_{2,1}^{\left(1\right)} + 10s_{3}^{\left(1\right)}\right)s_{2}^{\left(2\right)}$                          \\
\multicolumn{1}{|l|}{}    & \phantom{this is space} $+t^6\left(\left(s_{1, 1, 1}^{\left(1\right)} + 9s_{2, 1}^{\left(1\right)} + 12s_{3}^{\left(1\right)}\right)s_{1, 1}^{\left(2\right)} + \left(s_{1,1,1}^{\left(1\right)} + 20s_{2, 1}^{\left(1\right)} + 30s_{3}^{\left(1\right)}\right)s_{2}^{\left(2\right)}\right) + \cdots$ \\ \hline
\end{tabular}
\caption{The $(S_m \times S_n)$-equivariant Poincare polynomials of $\Mbar_{1, m|n}$ for $m + n \leq 5$. The omitted terms are determined by Poincar{\'e} duality.}
\label{EquivariantGenusOneTable}
\end{table}
\end{landscape}

\begin{landscape}
\renewcommand{\arraystretch}{2}
\begin{table}[]
\begin{tabular}{|c|l|}
\hline
$n$  & \multicolumn{1}{c|}{$E_{\Mbar_{1, 0|n}}(u, v)$}                                                                                                                            \\ \hline
$1$  & $uv + 1$                                                                                                                                                                   \\ \hline
$2$  & $u^2v^2 + 2uv + 1$                                                                                                                                                         \\ \hline
$3$  & $u^3v^3 + 4u^2v^2 + 4uv + 1$                                                                                                                                               \\ \hline
$4$  & $u^4v^4 + 7u^3v^3 + 13u^2v^2 + 7uv + 1$                                                                                                                             \\ \hline
$5$  & $u^5v^5 + 11u^4v^4 + 35u^3v^3 + 35u^2v^2 + 11uv + 1$                                                                                                                       \\ \hline
$6$  & $u^6v^6 + 16u^5v^5 + 81u^4v^4 + 140u^3v^3 + 81u^2v^2 + 16uv + 1$                                                                                                           \\ \hline
$7$  & $u^7v^7 + 22u^6v^6 + 168u^5v^5 + 476u^4v^4 + 476u^3v^3 + 168u^2v^2 + 22uv + 1$                                                                                             \\ \hline
$8$  & $u^8v^8 + 29u^7v^7 + 323u^6v^6 + 1456u^5v^5 + 2458u^4v^4 + 1456u^3v^3 + 323u^2v^2 + 29uv + 1$                                                                              \\ \hline
$9$  & $u^9v^9 + 37u^8v^8 + 591u^7v^7 + 4201u^6v^6 + 11901u^5v^5 + 11901u^4v^4 + 4201u^3v^3 + 591u^2v^2 + 37uv + 1$                                                               \\ \hline
$10$ & $u^{10}v^{10} + 46u^9v^9 + 1051u^8v^8 + 11850u^7v^7 + 55975u^6v^6 + 94230u^5v^5 + 55975u^4v^4 + 11850u^3v^3 + 1051u^2v^2 + 46uv + 1$                                      \\ \hline
$11$ & $u^{11}v^{11} + 56u^{10}v^{10} + 1848u^9v^9 + 33451u^8v^8 + 258940u^7v^7 + 710512u^6v^6 - u^{11} - v^{11} + 710512u^5v^5 + 258940u^4v^4 + \cdots$ \\ \hline
\end{tabular}
\caption{The Hodge polynomial of $\Mbar_{1, 0|n}$ for $n \leq 11$. }
\label{NumericalGenusOneTable}
\end{table}
\end{landscape}

\begin{landscape}
\renewcommand{\arraystretch}{2}
\begin{table}[]
\begin{tabular}{|c|l|c|}
\hline
$\left(m, n\right)$ & \multicolumn{1}{c|}{$E_{\calM_{2, m|n}}^{S_m \times S_n}\left(0,0\right)= \sum_{k = 0}^{6 + 2\left( m + n\right)} \left(-1\right)^k\mathrm{ch}_{m,n}\left(W_0H^k_c\left(\M_{2, m|n};\C \right)\right)$} &$E_{\calM_{2, m|n}}(0,0)$ \\ \hline
$\left(0,2\right)$  & $-s_{2}^{\left(2\right)}$ &$-1$ \\ \hline
$\left(0,3\right)$  & $-s_{2, 1}^{\left(2\right)} - s_3^{\left(2\right)}$&$-3$\\ \hline
$\left(1,2\right)$  & $-s_{1}^{\left(1\right)}s_{2}^{\left(2\right)}$ &$-1$\\ \hline
$\left(0,4\right)$  & $-s_{2, 1, 1}^{\left(2\right)} - s_{2, 2}^{\left(2\right)} - s_{3, 1}^{\left(2\right)} - s_{4}^{\left(2\right)}$&$-9$\\ \hline
$\left(1,3\right)$  & $-s_{1}^{\left(1\right)}\left(s_{1,1,1}^{\left(2\right)} + s_{2,1}^{\left(2\right)}\right)$ &$-3$\\ \hline
$\left(2,2\right)$  & $\left(-s_{1, 1}^{\left(1\right)} - s_{2}^{\left(1\right)}\right)s_{1,1}^{\left(2\right)} + \left(-s_{1, 1}^{\left(1\right)} + s_{2}^{\left(1\right)}\right)s_{2}^{\left(2\right)}$ & $-2$\\ \hline
$\left(0,5\right)$  & $-s_{2, 1, 1, 1}^{\left(2\right)} - s_{2, 2, 1}^{\left(2\right)} - s_{3, 1, 1}^{\left(2\right)} - s_{3, 2}^{\left(2\right)} - s_{4, 1}^{\left(2\right)} - s_{5}^{\left(2\right)}$ & $-25$\\ \hline
$\left(1,4\right)$  & $-s_{1}^{\left(1\right)}s_{2, 1, 1}^{\left(2\right)}$ &$-3$\\ \hline
$\left(2,3\right)$  & $s_{2}^{\left(1\right)}\left(s_{1,1,1}^{\left(2\right)} + s_{3}^{\left(2\right)}\right) + \left(2s_{2}^{\left(1\right)} - s_{1,1}^{\left(1\right)}\right)s_{2, 1}^{\left(2\right)}$&$4$\\ \hline
$\left(3,2\right)$  & $\left(s_{2, 1}^{\left(1\right)} + 2s_{3}^{\left(1\right)}\right)s_{1, 1}^{\left(2\right)} + \left(s_{2, 1}^{\left(1\right)} + 2s_{3}^{\left(1\right)}\right)s_{2}^{\left(2\right)}$&$8$\\ \hline
$\left(0,6\right)$  & $-s_{1, 1, 1, 1, 1, 1}^{\left(2\right)} - s_{2, 1, 1, 1, 1}^{\left(2\right)} - s_{2, 2, 1, 1}^{\left(2\right)} - s_{2, 2, 2}^{\left(2\right)} - s_{3, 1, 1, 1}^{\left(2\right)} - s_{3, 2, 1}^{\left(2\right)} - s_{4, 1, 1}^{\left(2\right)} - s_{4, 2}^{\left(2\right)} - s_{5, 1}^{\left(2\right)} - s_{6}^{\left(2\right)}$ &$-71$\\ \hline
$\left(1,5\right)$  & $-s_1^{\left(1\right)}\left(2s_{1,1,1,1,1}^{\left(2\right)} + s_{2,1,1,1}^{\left(2\right)} + s_{2,2,1}^{\left(2\right)}\right)$ &$-11$ \\ \hline
$\left(2,4\right)$  & $\left(-3s_{1, 1}^{\left(1\right)} - s_{2}^{\left(1\right)}\right)s_{1, 1, 1, 1}^{\left(2\right)} + \left(-4s_{1, 1}^{\left(1\right)} + 2s_2^{\left(1\right)}\right)s_{2,1,1}^{\left(2\right)} + \left(-s_{1, 1}^{\left(1\right)} - s_{2}^{\left(1\right)}\right)s_{2, 2}^{\left(2\right)} + \left(-s_{1, 1}^{\left(1\right)} + s_{2}^{\left(1\right)}\right)s_{3, 1}^{\left(2\right)}$ & $-14$\\ \hline
$\left(3,3\right)$  & $\left(-5s_{1, 1, 1}^{\left(1\right)} - 3s_{2, 1}^{\left(1\right)} + s_3^{\left(1\right)}\right)s_{1,1,1}^{\left(2\right)} + \left(-2s_{1, 1, 1}^{\left(1\right)} - 4s_{2, 1}^{\left(1\right)}\right)s_{2, 1}^{\left(2\right)} + \left(s_{1, 1, 1}^{\left(1\right)} - s_{2, 1}^{\left(1\right)} - s_{3}^{\left(1\right)}\right)s_3^{\left(2\right)}$ &$-32$ \\ \hline
$\left(4,2\right)$  & $\left(-3s_{1, 1, 1, 1}^{\left(1\right)} - 5s_{2, 1, 1}^{\left(1\right)} - 3s_{2, 2}^{\left(1\right)} - 2s_{3, 1}^{\left(1\right)}\right)s_{1,1}^{\left(2\right)} + \left(-s_{1, 1, 1, 1}^{\left(1\right)} - 5s_{2, 2}^{\left(1\right)} - 2s_{3, 1}^{\left(1\right)} - 3s_4^{\left(1\right)}\right)s_{2}^{\left(2\right)}$ & $-50$\\ \hline
\end{tabular}
\caption{The $(S_m \times S_n)$-equivariant and numerical weight zero compactly supported Euler characteristics of $\calM_{2, m|n}$.}
\label{WeightZeroGenusTwoTable}
\end{table}
\end{landscape}

\bibliographystyle{amsalpha}
\bibliography{reference}

\end{document}